\newtheorem{theorem}{Theorem}
\newtheorem{lemma}{Lemma}
\newtheorem{assumption}{Assumption}
\newtheorem{remark}{Remark}
\newtheorem{corollary}{Corollary}
\def\be{\begin{equation}}
\def\ee{\end{equation}}
\def\ben{\begin{eqnarray}}
\def\een{\end{eqnarray}}
\newcommand{\la}{\langle}
\newcommand{\ra}{\rangle}
\newcommand{\bone}{\mbox{$\bf 1$}}
\newcommand{\supp}{\text{supp}}
\newcommand{\tA}{\mathtt{A}}
\newcommand{\tB}{\mathtt{B}}
\newcommand{\tD}{\mathtt{D}}
\newcommand{\tR}{\mathtt{R}}
\newcommand{\tK}{\mathtt{K}}
\newcommand{\Ker}{\rm Ker\,}
\newcommand{\bR}{\mathbb{R}}
\newcommand{\balpha}{\mbox{\boldmath{$\alpha$}}}
\newcommand{\bbeta}{\mbox{\boldmath{$\beta$}}}
\newcommand{\bsigma}{\mbox{\boldmath{$\sigma$}}}
\newcommand{\sbsigma}{\mbox{\boldmath{\scriptsize $\sigma$}}}
\newcommand{\sbtheta}{\mbox{\boldmath{\scriptsize $\theta$}}}
\newcommand{\btau}{\mbox{\boldmath{$\tau$}}}
\newcommand{\bphi}{\mbox{\boldmath{$\phi$}}}
\newcommand{\bpsi}{\mbox{\boldmath{$\psi$}}}
\newcommand{\btheta}{\mbox{\boldmath{$\theta$}}}
\newcommand{\cQ}{\mathcal{Q}}
\newcommand{\cL}{\mathcal{L}}
\newcommand{\cM}{\mathcal{M}}
\newcommand{\cV}{\mathcal{V}}
\newcommand{\cT}{\mathcal{T}}
\newcommand{\bx}{\mathbf{x}}
\newcommand{\bz}{\mathbf{z}}
\newcommand{\bN}{\mathbb{N}}
\newcommand{\CN}{\mathcal{N}}
\newcommand{\tM}{\mathtt{M}}
\newcommand{\tW}{\mathtt{W}}
\newcommand{\talpha}{\tilde{\alpha}}
\newcommand{\tsigma}{\tilde{\sigma}}
\title{A Stabilized Mixed Finite Element Method for Thin Plate Splines 
Based on Biorthogonal Systems}
\author{Bishnu P.~Lamichhane
\thanks{School of Mathematical \& Physical Sciences,
Mathematics Building - V127,
University of Newcastle,
University Drive,
Callaghan, NSW 2308, Australia, {\tt Bishnu.Lamichhane@newcastle.edu.au}}
  \; and Markus Hegland\thanks{Centre for Mathematics and its Applications, 
Mathematical Sciences Institute,
Australian National University, Canberra, ACT 0200, Australia, 
\tt{Markus.Hegland@anu.edu.au}}}
\begin{document}
\maketitle
\begin{abstract}
The thin plate spline is a popular tool for the interpolation and 
smoothing of scattered data.
In this paper we propose a novel stabilized mixed finite element method 
for the discretization of thin plate splines. The mixed formulation 
is obtained by introducing the gradient of the smoother 
as an additional unknown.  
Working with a pair of bases for 
the gradient of the smoother and the Lagrange multiplier 
which forms a biorthogonal system, 
we can eliminate these two variables (gradient of the smoother 
and Lagrange multiplier) leading to a 
positive definite formulation. A sub-optimal a priori error estimate 
is proved by using the superconvergence property of 
a gradient recovery operator. 
\end{abstract}
\noindent\textit{Key words: Thin plate splines, scattered data smoothing, 
mixed finite element method, saddle point problem, 
biorthogonal system, a priori estimate}\\
\noindent\textit{AMS subject classification: 65D10, 65D15, 65L60, 41A15}

\section{Introduction}
We propose a new finite element approach for the discretization of the 
the thin plate spline \cite{Duc77,Wah90}, which is one of the most 
popular approach in scattered data fitting. 
Scattered data fitting problems occur in many applications such as  
data mining, reconstruction of geometric models, image 
processing, parameter estimation, optic flow, etc., see \cite{BSJ95,Isk04,Wen05}.
 
Let $\Omega \subset \bR^d$ with $d\in \{2,3\}$ 
be a closed and bounded region with polygonal or polyhedral boundary.
In the following, we use  standard notation for the 
norm and semi-norm of Sobolev spaces \cite{BS94}.
Given a set 
 ${\cal G}=\{\bx_i\}_{i=1}^N$ of scattered points in $\Omega$, 
and a function $r$ on ${\cal G}$ with $z_i=r(\bx_i)$ for $i=1,\cdots,N$, 
the thin plate spline is a smooth function 
$u \in H^2(\Omega)$ \cite{Duc77,Wah90} such that 
\begin{equation}\label{lspline}
F(u) \leq F(v)\quad\text{for all}\quad v \in  H^2(\Omega)
\end{equation}
where  
\begin{equation}\label{func}
F(u)=\sum_{i=1}^N (u(\bx_i)-z_i)^2+\alpha \int_{\Omega} 
\sum_{|\nu|=2} {{2}\choose{\nu}} (D^{\nu}u)^2 \, d\bx,
\end{equation}
$\nu=(\nu_1,\cdots,\nu_d)\in \bN_0^d$ is a multi-index,
$|\nu|=\sum_{i=1}^d\nu_i$, and $\alpha$ is a positive constant.

A conventional approach is to use radial basis functions to 
approximate the space $H^2(\Omega)$ in \eqref{lspline}, which 
leads to a dense system matrix. The solution of such a system 
is very expensive when a large data set has to be modelled. 
In this paper we propose an efficient discretization technique 
for the minimization of the functional \eqref{lspline}. 
The basic idea of a finite element method is to minimize 
the functional $F$ given by \eqref{func} over a finite-dimensional function space.
If we want to discretize the minimization problem using a
conforming approach, we need to construct a discrete finite element space 
which is a subset of the Sobolev space $H^2(\Omega)$. 
Construction of such a finite element space is expensive 
\cite{Cia78,BS94}. The class of standard non-conforming finite elements 
\cite{Cia78,Bra01} provides a more efficient discretization 
than the conforming approach. However, their implementation 
requires a more complicated data structure, and 
a suitably constructed mixed formulation provides 
a more efficient and flexible discretization than the non-conforming 
approach. Therefore,  following a similar approach 
as in \cite{JP82,AHR98,CHH00,Ram02}, we modify the original 
minimization problem \eqref{lspline} so that 
the minimization is done over the Sobolev space  $H^1(\Omega)$ 
rather than over the Sobolev space  $H^2(\Omega)$,
and the formulation allows an efficient mixed finite element discretization.
A similar idea has been exploited in \cite{CR74,Fal78,Cia78,Mon87,Lam11b}
for the solution of biharmonic equation with 
simply supported and clamped boundary condition.

The rest of the paper is organized as follows. 
In the remainder of this section, we fix some notation and introduce an 
alternative equivalent variational problem. The next section 
introduces a finite element solution of the problem. We recast 
the problem as a saddle point problem, and 
discuss its algebraic structure. This  motivates us the usage of 
a pair of finite element 
bases (for the gradient of the smoother and the Lagrange multiplier)
which forms a biorthogonal system. Section~\ref{apriori} is devoted 
to the analysis of the discrete problem. Eliminating 
the gradient and the Lagrange multiplier, we get a positive 
definite formulation of the saddle point problem for which we prove 
the existence of a unique solution. The final part of 
Section~\ref{apriori} shows the sub-optimal convergence of our finite element solution to 
the continuous solution. We conclude the paper with a summary. 

Let the Sobolev space $H^1(\Omega)\times [H^1(\Omega)]^d$ be denoted by $\cV$,
and  for two matrix-valued functions 
$\balpha:\Omega \rightarrow \bR^{d\times d}$ and
$\bbeta:\Omega \rightarrow \bR^{d\times d}$,
the Sobolev inner product be defined as 
\[  ( \balpha,\bbeta )_{H^k(\Omega)}:={\sum_{i=1}^d\sum_{j=1}^d (\alpha_{ij},\beta_{ij})_{H^k(\Omega)}},
\] 
where $(\balpha)_{ij}=\alpha_{ij},\; (\bbeta)_{ij}=\beta_{ij}$
with $\alpha_{ij}, \beta_{ij} \in  H^k(\Omega)$, 
and the norm $\|\cdot\|_{H^k(\Omega)}$ is induced from this inner product. 
For $k=0$, an equivalent notation 
\[  ( \balpha,\bbeta )_{L^2(\Omega)}:=\sum_{i=1}^d\sum_{j=1}^d
\int_{\Omega} \alpha_{ij}\beta_{ij}\,dx=\int_{\Omega}\balpha:\bbeta\,dx \]
for the $L^2$-inner product will be used and the $L^2$-norm
 $\|\cdot\|_{L^2(\Omega)}$ is induced by this inner product.

A new formulation of the functional $F$ in \eqref{lspline} is obtained by 
introducing an auxiliary variable 
$\bsigma = \nabla u $ such that the minimization problem   
\eqref{lspline} is rewritten as \cite{JP82,CHH00}
\begin{equation}\label{mspline} 
\min_{\substack{(u,\sbsigma) \in \cV \\ \sbsigma=\nabla u}} G(u,\bsigma)\, ,
\end{equation}
where 
\begin{equation*}
G(u,\bsigma)= \sum_{i=1}^N (u(\bx_i)-z_i)^2+\alpha \|\nabla \bsigma\|_{L^2(\Omega)}^2 .
\end{equation*}

\section{Finite element problem}\label{fesmooth}
Let $\cT_h$ be a quasi-uniform partition of the domain  $\Omega$ in 
$d$-simplices having the mesh-size $h$. 
Let $\hat{T}$ be a reference triangle  defined as 
\[ \hat T:=\{(x,y) :\, 0< x,0< y,x+y< 1\},\] or 
a reference tetrahedron defined as 
\[\hat T:=\{(x,y,z) :\, 0< x,0< y,0<z, x+y+z< 1\}.\]
 
The finite element space is defined by the affine map $F_T$ from the 
reference triangle or tetrahedron 
 $\hat{T}$ to a physical triangle or tetrahedron $T\in \cT_h$. 
Let $ \hat\cL(\hat T)$ and  $ \hat\cQ (\hat T)$ 
be spaces of linear and quadratic polynomials on $\hat T$, 
respectively. Then the finite element space  based on the mesh $\cT_h$ 
is defined as  the space of continuous functions whose restrictions 
to an element $T$ are obtained by an affine map  
from the reference element $\hat T$; that is,
\begin{equation}\label{fespacel}
\cL_h:=\left \{v_h \in H^1(\Omega):\,
v_h|_T=\hat{v}_h\circ F^{-1}_{T},\ \ \hat{v}_h\in
\hat\cL(\hat{T}),\;T\in \cT_h\right \},
\end{equation}
and 
\begin{equation}\label{fespaceq}
\cQ_h:=\left \{v_h \in H^1(\Omega):\,
v_h|_T=\hat{v}_h\circ F^{-1}_{T},\ \ \hat{v}_h\in
\hat\cQ(\hat{T}),\;T\in \cT_h\right \},
\end{equation}
see \cite{Cia78,BS94,Bra01}.

Let  $\cM_h\subset L^2(\Omega)$ be a piecewise 
polynomial space based on  $ \cT_h$ 
 satisfying the following assumptions.
\begin{assumption}\label{A1A2}
\begin{itemize} 
\item[\ref{A1A2}(i)] $\dim \cM_h = \dim \cL_h$.
\item[\ref{A1A2}(ii)] There is a constant $\beta>0$ independent of 
the triangulation $\cT_h$ such that 
\begin{eqnarray}
\|\phi_h\|_{L^2(\Omega)} \leq \beta \sup_{\mu_h \in \cM_h \backslash\{0\}} 
\frac{\int_{\Omega} \mu_h\phi_h\,d\bx} {\|\mu_h\|_{L^2(\Omega)}},
\quad \phi_h \in \cL_h.
\end{eqnarray}
\item[\ref{A1A2}(iii)] The space $\cM_h$ has the approximation property:
\begin{equation}
\inf_{\lambda_h \in \cM_h}\|\phi-\lambda_h\|_{L^2(\Omega)}\leq Ch |\phi|_{H^1(\Omega)},\quad \phi \in H^1(\Omega).
\end{equation}
\end{itemize}
\end{assumption} 
As an example, we can have  $ \cM_h= \cL_h \subset H^1(\Omega)$.
However, we want to utilize the flexibility that 
$\cM_h \subset L^2(\Omega)$ to obtain an efficient finite element  
scheme. 

To obtain the discrete form of the minimization problem 
\eqref{mspline}, we introduce a finite element space $\cV_h$, which is  
a discrete counterpart of $\cV$ as $\cV_h= \cL_h\times [\cL_h]^d$ or 
$\cV_h= \cQ_h \times [\cL_h]^d$. 
Replacing the space $\cV$ in \eqref{mspline} by our discrete space 
$\cV_h$, our discrete problem is to find 
\begin{equation}\label{dspline} 
\min_{(u_h,\sbsigma_h) \in \cV_h}  \sum_{i=1}^N (u_h(\bx_i)-z_i)^2+\alpha 
\|\nabla \bsigma_h\|_{L^2(\Omega)}^2\, 
\end{equation}
subject to \begin{equation}\label{constr}
\la\bsigma_h,\btau_h \ra_{L^2(\Omega)}=\la \nabla u_h, \btau_h \ra_{L^2(\Omega)}, \;\btau_h \in [\cM_h]^d. 
\end{equation}
If we modify the constraint \eqref{constr} to  
\begin{equation*}
\la \nabla u_h, \nabla v_h \ra_{L^2(\Omega)}=
\la\bsigma_h,\nabla v_h \ra_{L^2(\Omega)}, \;v_h \in \cL_h, \end{equation*}
we obtain the finite element thin plate spline presented 
in \cite{AHR98,RHA03}. There are two drawbacks of the finite element 
thin plate spline presented in \cite{AHR98,RHA03}. The first one being 
the saddle point structure of the system matrix arising from the 
discretization which is difficult to solve.
The second drawback is that it does not necessarily 
converge to the standard thin plate spline 
although it has similar smoothing properties as 
the standard thin plate spline \cite{RHA03}. Our goal here is to 
obtain a true approximation of the standard thin plate spline. 

Now we introduce a saddle point formulation of 
the  approach, which can be shown to be  equivalent to 
the minimization problem \eqref{dspline} by using the ideas in \cite{Cia78,BF91}.
We denote the vector of  function values of $u\in C^0(\Omega)$
at the measurement points $\bx_1,\bx_2,\cdots,\bx_N$ by  $Pu\in\bR^N$, i.e.,
\[Pu=(u(\bx_1),u(\bx_2),\cdots,u(\bx_N))^T.\]
Introducing a Lagrange multiplier $\bphi_h$, the variational 
saddle point formulation of the minimization problem \eqref{dspline} 
is to find $((u_h,\bsigma_h),\bphi_h) \in \cV_h \times [\cM_h]^d$ 
so that  
\begin{equation}\label{saddle}
\begin{array}{ccccccccc}
\tilde A((u_h,\bsigma_h),(v_h,\btau_h))&+&B(\bphi_h,(v_h,\btau_h))&=&f(v_h),\; 
&(v_h,\btau_h) &\in& \cV_h, \\
B(\bpsi_h,(u_h,\bsigma_h))&&&=&0,\; &\bpsi_h &\in & [\cM_h]^d,
\end{array}
\end{equation}
 where bilinear forms $\tilde A(\cdot,\cdot)$,
$B(\cdot,\cdot)$ and $f(\cdot)$ are given by
\begin{eqnarray*}
\tilde A((u_h,\bsigma_h),(v_h,\btau_h))&=& (Pu_h)^TPv_h+\alpha \int_{\Omega} \nabla \bsigma_h:\nabla \btau_h\,d\bx,\\
B(\bpsi_h,(v_h,\btau_h))&=&\int_{\Omega} \btau_h\cdot\bpsi_h\,d\bx -\int_{\Omega}
 \nabla v_h\cdot\bpsi_h\, d\bx,\;\text{and}\\  f(v_h)&=&(Pv_h)^T\bz.
\end{eqnarray*}
We recall that the mixed formulation of our problem is closely related to 
the mixed formulation of the Mindlin--Reissner plate \cite{BF91,AB93,Bra96,AF97}, and hence we use some of the ideas presented in \cite{BF91,AB93} to 
analyze our problem.
The existence and uniqueness of the solution of the saddle point problem  
\eqref{saddle} is performed by using the theory presented in \cite{BF91,AB93}. 
The main difficulty here as well as in the context of 
the Mindlin--Reissner plate is that the bilinear form 
$\tilde A(\cdot,\cdot)$ is not coercive on the whole space $\cV_h$. 
However, it would be sufficient that 
the bilinear from $\tilde A(\cdot,\cdot)$ is 
coercive on the space $\Ker B_h$ defined as 
\begin{equation}\label{kernels}
\Ker B_h:= \left\{(v_h,\btau_h)\in \cV_h:\;
\int_{\Omega}(\btau_h-\nabla v_h)\cdot \bpsi_h\,dx=0,\;\bpsi_h \in [\cM_h]^d
\right\}.
\end{equation}
For $\cL_h$ as defined by \eqref{fespacel} and $\cM_h$ satisfying 
Assumptions \ref{A1A2}(i)--\ref{A1A2}(iii), we cannot obtain 
coercivity of $\tilde A(\cdot,\cdot)$ even on the space 
 $\Ker B_h$. This gives us a motivation to modify the 
bilinear form $\tilde A(\cdot,\cdot)$ consistently by adding a 
stabilization term so that 
we obtain the coercivity on the space $\Ker B_h$. 
The modification of the bilinear form $\tilde A(\cdot,\cdot)$ 
is done as suggested by Arnold and Brezzi \cite{AB93} for the Mindlin--Reissner
plate so that our discrete saddle point problem is  
to find $((u_h,\bsigma_h),\bphi_h) \in \cV_h \times [\cM_h]^d$ 
such that  
\begin{equation}\label{dsaddle}
\begin{array}{ccccccccc}
A((u_h,\bsigma_h),(v_h,\btau_h))&+&B(\bphi_h,(v_h,\btau_h))&=&f(v_h),\; 
&(v_h,\btau_h) &\in& \cV_h, \\
B(\bpsi_h,(u_h,\bsigma_h))&&&=&0,\; &\bpsi_h &\in & [\cM_h]^d,
\end{array}
\end{equation}
 where the bilinear form $A(\cdot,\cdot)$ is defined as 
\begin{eqnarray*}
A((u_h,\bsigma_h),(v_h,\btau_h))= (Pu_h)^TPv_h+\alpha \int_{\Omega} \nabla \bsigma_h:\nabla \btau_h\,d\bx+ r\int_{\Omega} (\bsigma_h - \nabla u_h)\cdot (\btau_h-\nabla v_h)\,d\bx
\end{eqnarray*}
with $r>0$ being a parameter.
Since the stabilization term is consistent, the 
parameter $r>0$ can be arbitrary in principle.
By choosing an appropriate parameter, 
the stabilization can, in addition, accelerate the solver 
as in an augmented Lagrangian formulation \cite{BL97}.
Since we do not focus on this aspect of the problem, we simply 
put $r=1$ in the rest of the paper.
After putting $r=1$, we have \[ 
A((u_h,\bsigma_h),(v_h,\btau_h))= \tilde A((u_h,\bsigma_h),(v_h,\btau_h))
+ \int_{\Omega} (\bsigma_h - \nabla u_h)\cdot (\btau_h-\nabla v_h)\,d\bx.
\]

Here our interest is to eliminate the degree of freedom corresponding to 
$\bsigma_h$ and  $\bphi_h$ and arrive at a formulation only depending on $u_h$.
This will dramatically reduce the size of the system matrix, and 
which after elimination of these variables will be 
positive definite. It is well-known that an efficient numerical technique 
can be applied to solve a positive definite system. 

We  now closely look at the algebraic formulation of the problem. 
In the following, we use the same notation  $u_h$, $\bsigma_h$ and $\bphi_h$ 
for the vector representation of the solutions and the solutions as elements in $\cL_h$, $[\cL_h]^d$ and 
$[\cM_h]^d$.
Let $\tR$, $\tA$, $\tB$, $\tW$, $\tK$, $\tD$ and $\tM$ 
be the matrices  associated with the bilinear 
forms $(Pu_h)^TPv_h$,
$\int_{\Omega} \nabla \bsigma_h: \nabla \btau_h\,d\bx$, 
$\int_{\Omega} \nabla u_h\cdot\psi_h\,d\bx$,
$\int_{\Omega} \nabla u_h\cdot\btau_h \, d\bx$,
$\int_{\Omega}\nabla u_h \cdot \nabla v_h\,d\bx$, 
$\int_{\Omega} \bsigma_h \cdot \bpsi_h\,d\bx$
and  $\int_{\Omega} \bsigma_h\cdot\btau_h\,d\bx$, 
respectively. The matrix $\tD$ associated with the bilinear form 
$\int_{\Omega} \bsigma_h \cdot \bpsi_h\,d\bx$ is often called a Gram matrix.
In case of the saddle point formulation, $u_h$, $\bsigma_h$ and $\bphi_h$ are 
three independent unknowns. 
 Letting the test functions $\btau_h$ and $v_h$ to be zero subsequently in 
the first equation of \eqref{dsaddle}, we have 
\begin{equation*}
\begin{array}{cccccc}
(Pu_h)^TPv_h - \int_{\Omega} \nabla v_h\cdot\bphi_h\, d\bx -
\int_{\Omega} (\bsigma_h -\nabla u_h)\cdot \nabla v_h\, d\bx 
&= &f(v_h),\; &v_h&\in &\cL_h,\\
\alpha \int_{\Omega} \nabla \bsigma_h:\nabla \btau_h\,d\bx + 
\int_{\Omega} \bphi_h\cdot\btau_h\,d\bx+\int_{\Omega} (\bsigma_h -\nabla u_h)\cdot \btau_h\, d\bx 
& =& 0,\; &\btau_h  &\in&  [\cL_h]^d.
\end{array}
\end{equation*}
Then the saddle point problem 
\eqref{dsaddle} can be written as the linear system
\begin{equation} \label{saddlealg}
\left[\begin{array}{cccc} 
 \tR+\tK & -\tW^T &-\tB^T \\
-\tW & \alpha \tA+\tM & \tD^T \\
-\tB & \tD & 0
 \end{array} \right]
\left[\begin{array}{ccc} u_h \\ \bsigma_h \\ \bphi_h 
\end{array}\right]=
\left[\begin{array}{ccc} f_h\\0\\0 \end{array}\right],
\end{equation}
where $f_h$ is the vector form of discretization of the linear form 
$f(\cdot)$. Since our goal is to obtain an efficient numerical scheme, 
we want to statically condense out the degree of freedom associated 
with $\bsigma_h$ and $\bphi_h$. This can be achieved easily if 
$\tD$ is invertible and diagonal leading to  a system for $u_h$ only.

Let  $\{\varphi_1,\cdots,\varphi_n\}$ 
be the standard nodal finite element basis of $\cL_h$. 
We define a space $\cM_h$ spanned by the basis 
$\{\mu_1,\cdots,\mu_n\}$, where the basis functions of $\cL_h$ and 
$\cM_h$ satisfy a condition of biorthogonality relation
\begin{eqnarray} \label{biorth}
  \int_{\Omega} \mu_i \ \varphi_j \,d\bx = c_j \delta_{ij},
\; c_j\neq 0,\; 1\le i,j \le n,
\end{eqnarray}
 where $n := \dim \cM_h = \dim \cL_h$,  $\delta_{ij}$ is 
 the Kronecker symbol, and $c_j$ a positive scaling factor.
This scaling factor $c_j$ is chosen to be proportional 
to the area $|\supp \varphi_j|$.
In the following, we give these basis functions for linear 
simplicial finite elements in two and three dimensions. 
For the reference triangle 
$\hat T:=\{(x,y) :\, 0< x,0< y,x+y< 1\}$, we have 
\begin{eqnarray*}
  \hat \mu_1:=3-4x-4y,\,
  \hat\mu_2:=4x-1,\;\text{and}\;
  \hat\mu_3:=4y-1,
\end{eqnarray*}
where the basis functions $\hat \mu_1$, $\hat \mu_2$ and 
 $\hat \mu_3$ are associated with three vertices 
$(0,0)$, $(1,0)$ and $(0,1)$ of the reference triangle. 
For the reference tetrahedron $\hat T:=\{(x,y,z):\,0< x,0<y,0<z,
x+y+z<1\}$, we have 
\begin{eqnarray*}
  \hat \mu_1:=4-5x-5y-5z,\,
  \hat\mu_2:=5x-1,\;\text{and}\;
  \hat\mu_3:=5y-1,\,
\hat\mu_4:=5z-1,
\end{eqnarray*}
where the basis functions $\hat \mu_1$, $\hat \mu_2$, $\hat \mu_3$ and 
 $\hat \mu_4$ associated with four vertices 
$(0,0,0)$, $(1,0,0)$, $(0,1,0)$ and $(0,0,1)$ of the reference tetrahedron. 
The global basis functions for the test space are constructed by 
glueing the local basis functions together and thus the 
assembling process is exactly the same as in the standard 
finite element method. 
 
These global basis functions then satisfy the condition of 
biorthogonality \eqref{biorth} with global finite element basis functions.
 As these functions in $\cM_h$ are 
defined exactly in the same way as the finite element basis functions in 
$\cL_h$, they satisfy $\supp \mu_i=\supp \varphi_i$ for 
$i=1,\cdots, n$. After statically condensing out variables 
$\bsigma_h$ and $\bphi_h$ (block elimination), we arrive at a reduced system 
\[\left((\tR+\tK)-(\tW^T\tD^{-1}\tB+\tB^T\tD^{-1}\tW)+
\tB^T\tD^{-1}(\alpha \tA+\tM)\tD^{-1}\tB\right)u_h = f_h.
\]
\begin{remark}\label{rem1}
Such biorthogonal basis functions are very popular in 
the context of mortar finite elements \cite{BWHabil,KLP01,Lam06}.
Construction of local basis functions 
of the space $\cM_h$ satisfying all three 
Assumptions \ref{A1A2}(i)--\ref{A1A2}(iii) as well as 
the biorthogonality condition \eqref{biorth} 
for different finite element spaces can be found in 
\cite{BWHabil,LSW05,LW07}. 
Working with nodal finite element basis functions based on 
Gauss--Lobatto quadrature nodes for rectangular or hexahedral 
triangulation, we have shown the construction of local basis functions 
of $\cM_h$ satisfying all these assumptions for an arbitrary 
order finite element space \cite{LW07}. 
\end{remark}
\section{An a priori error estimate}\label{apriori}
In the previous section, we have shown how the degree of freedom for the 
gradient and Lagrange multipliers can be eliminated from the 
linear system \eqref{saddlealg}. Now we 
want to eliminate the gradient of the smoother $\bsigma_h$ and Lagrange 
multiplier $\bphi_h$ from the saddle point problem \eqref{dsaddle}. 
To this end, we introduce a quasi-projection 
operator: $R_h:L^2(\Omega)\rightarrow \cL_h$, which is defined as 
\[ \int_{\Omega} R_hv\, \mu_h \,d\bx = \int_{\Omega} v \mu_h \,d\bx, 
\; v \in L^2(\Omega),\; \mu_h \in \cM_h. \]
This type of operator is introduced in \cite{SZ90} to obtain  
the finite element interpolation of non-smooth functions satisfying 
boundary conditions, and
is used in \cite{BMP94} in the context of mortar finite elements. 
The definition of $R_h$ allows us to write the weak gradient as
\[ \bsigma_h= R_h\nabla u_h, \]
where the operator $R_h$ is applied to the vector $\nabla u_h$ componentwise.
We see that $R_h$ is well-defined due to Assumptions 
\ref{A1A2}(ii). Furthermore, the restriction of 
$R_h$ to $\cL_h$ is the identity. Hence $R_h$ is a projection onto the 
space $\cL_h$. We note that $R_h$ is not the orthogonal projection onto 
$\cL_h$ but an oblique projection onto $\cL_h$. 
Oblique projectors are studied extensively in \cite{Gal03}, and 
different expressions for the norm of oblique projections are provided 
in \cite{Szy06}. 
According to the biorthogonality relation between the basis functions of 
$\cL_h$ and $\cM_h$ \eqref{biorth},
the action of operator $R_h$ on a function $v \in L^2(\Omega)$ 
can be written as 
\begin{equation}\label{eq2}
 R_hv = \sum_{i=1}^n\frac{\int_{\Omega} \mu_i \,v\,dx}{c_i} \varphi_i,
\end{equation}
and consequently the operator $R_h$ is local in the sense to be 
given below, see also \cite{AO00}.
Let $S(T')$ be the patch of an element $T' \in {\cal T}_h$ 
which is the interior of the closed set 
\begin{equation} \label{patch}
\bar S(T')=\overline{\bigcup{\{{ T} \in {\cal T}_h : 
 \partial T \cap \partial T' \neq \emptyset\}}}.
\end{equation}
Then $R_h$ is local in the sense that for any $v \in L^2(\Omega)$, 
the value of $R_hv$ at any point in $T\in \cT_h$ only 
depends on the values of $v$ in $S(T)$ \cite{AO00}.
In the following, we will use a generic constant 
$C$, which will take different values at different places but will be 
always independent of the mesh-size $h$.
The stability of $R_h$ in $L^2$-norm is shown in the following lemma 
\cite{KLP01}.
\begin{lemma} \label{lemma1}
Under Assumption \ref{A1A2}(ii), there exists $C>0$ such that 
 \begin{equation}\label{l2s}
\|R_h v\|_{L^2(\Omega)}\leq C \|v\|_{L^2(\Omega)}\quad\text{for all}\quad
v \in  L^2(\Omega).
\end{equation}
\end{lemma}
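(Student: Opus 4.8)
The plan is to exploit the explicit local formula \eqref{eq2} for $Q_h$ together with the inf--sup condition of Assumption \ref{A1A2}(ii), which is precisely the stability estimate for the oblique projector phrased variationally. I would proceed by a duality/inf--sup argument rather than by a direct elementwise computation, since the latter would require controlling the scaling constants $c_i$ relative to $|\supp\varphi_i|$ and the shape-regularity constants, which is messier.

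First I would observe that, by the definition of $Q_h$, for every $\mu_h \in M_h$ we have $\int_\Omega Q_h v \,\mu_h\,d\bx = \int_\Omega v\,\mu_h\,d\bx$, and hence
\[
\left| \int_\Omega Q_h v\, \mu_h\,d\bx \right| = \left| \int_\Omega v\, \mu_h\,d\bx \right| \le \|v\|_{L^2(\Omega)} \|\mu_h\|_{L^2(\Omega)}.
\]
Next, since $Q_h v \in S_h$, I would apply Assumption \ref{A1A2}(ii) with $\phi_h = Q_h v$, giving
\[
\|Q_h v\|_{L^2(\Omega)} \le \beta \sup_{\mu_h \in M_h\backslash\{0\}} \frac{\int_\Omega \mu_h\, Q_h v\,d\bx}{\|\mu_h\|_{L^2(\Omega)}} = \beta \sup_{\mu_h \in M_h\backslash\{0\}} \frac{\int_\Omega \mu_h\, v\,d\bx}{\|\mu_h\|_{L^2(\Omega)}} \le \beta \|v\|_{L^2(\Omega)},
\]
where the middle equality is exactly the defining property of $Q_h$ and the last step is Cauchy--Schwarz. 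Taking $C = \beta$ then yields \eqref{l2s}.

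The only subtlety to check — and the one place where some care is needed — is that $Q_h$ is genuinely well defined, so that the supremum above is finite and the manipulation is legitimate; this follows from Assumptions \ref{A1A2}(i) and \ref{A1A2}(ii), which together guarantee that the linear map $S_h \ni \phi_h \mapsto \big(\mu_h \mapsto \int_\Omega \mu_h \phi_h\,d\bx\big) \in M_h^*$ is an isomorphism (injectivity from the inf--sup inequality, surjectivity from the equality of dimensions). I do not anticipate a real obstacle here; the lemma is essentially a one-line consequence of the inf--sup stability once the duality pairing is set up correctly, and the bulk of the "work" is just recognizing that the right-hand side of Assumption \ref{A1A2}(ii) collapses on $M_h$ by the definition of $Q_h$.
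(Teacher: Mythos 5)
Your argument is correct and is essentially identical to the paper's proof: apply Assumption \ref{A1A2}(ii) to $\phi_h = Q_hv$, use the defining property of $Q_h$ to replace $Q_hv$ by $v$ in the numerator, and finish with Cauchy--Schwarz, yielding $C=\beta$. The well-definedness remark is a sensible addition but is handled in the paper in the discussion preceding the lemma.
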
 
\begin{proof}
By Assumption \ref{A1A2}(ii)
\begin{eqnarray}
\|R_h v\|_{L^2(\Omega)}\leq \beta \sup_{\mu_h \in \cM_h \backslash\{0\}} 
\frac{\int_{\Omega} \mu_h R_hv\,d\bx} {\|\mu_h\|_{L^2(\Omega)}} = 
\beta \sup_{\mu_h \in \cM_h \backslash\{0\}} 
\frac{\int_{\Omega} \mu_h v\,d\bx} {\|\mu_h\|_{L^2(\Omega)}} \leq 
\beta \|v\|_{L^2(\Omega)}.
\end{eqnarray} 
\end{proof}
In the following, $P_h: L^2(\Omega) \rightarrow \cL_h$ 
will denote the $L^2$-orthogonal projection onto $\cL_h$. It is well-known 
that the operator $P_h$ is stable in both $L^2$- and $H^1$-norms. 
Using the stability of the operator $R_h$ in the $L^2$-norm, and 
of the operator $P_h$ in the $H^1$-norm, we can show that 
$R_h$ is also stable in the $H^1$-norm, see \cite{Lam06} for the 
locally quasi-uniform case.
\begin{lemma} \label{lemma2}
Under Assumption \ref{A1A2}(ii), there exists $C>0$ such that 
\begin{eqnarray*}
|R_h w|_{H^1(\Omega)} 
\leq C|w|_{H^1(\Omega)}\quad\text{for all}\quad w \in  H^1(\Omega).
\end{eqnarray*}
\end{lemma}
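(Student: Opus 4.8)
The plan is to bound the $H^1$-seminorm of $Q_h w$ by inserting the $L^2$-orthogonal projection $P_h w$ as an intermediate term, using that $P_h w \in S_h$ and that $Q_h$ acts as the identity on $S_h$. Writing $Q_h w = Q_h(w - P_h w) + P_h w$, the triangle inequality gives
\[
|Q_h w|_{H^1(\Omega)} \leq |Q_h(w - P_h w)|_{H^1(\Omega)} + |P_h w|_{H^1(\Omega)}.
\]
The second term is controlled by the well-known $H^1$-stability of $P_h$ on quasi-uniform meshes: $|P_h w|_{H^1(\Omega)} \leq C |w|_{H^1(\Omega)}$. So the crux is to estimate the first term, in which the argument $w - P_h w$ is a quantity that is small in $L^2$.

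For that term I would exploit locality of $Q_h$ together with an inverse estimate. Since $Q_h$ is local (the value of $Q_h v$ on $T$ depends only on $v$ on the patch $S(T)$, by \eqref{eq2} and \eqref{patch}), and since $Q_h(w-P_hw)$ is a finite element function in $S_h$, an elementwise inverse inequality on the quasi-uniform mesh gives
\[
|Q_h(w - P_h w)|_{H^1(\Omega)}^2 = \sum_{T \in \cT_h} |Q_h(w-P_hw)|_{H^1(T)}^2
\leq C \sum_{T \in \cT_h} h^{-2} \|Q_h(w-P_hw)\|_{L^2(T)}^2.
\]
Using locality, $\|Q_h(w-P_hw)\|_{L^2(T)}^2 \leq C\|w - P_h w\|_{L^2(S(T))}^2$ (this is the local $L^2$-stability of $Q_h$, which follows from Lemma \ref{lemma1} applied on patches, or directly from the explicit formula \eqref{eq2} and a scaling argument on $\supp \mu_i = \supp \varphi_i$). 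Summing over $T$ and using the finite overlap of the patches $S(T)$ for a quasi-uniform mesh yields
\[
|Q_h(w - P_h w)|_{H^1(\Omega)}^2 \leq C h^{-2} \|w - P_h w\|_{L^2(\Omega)}^2.
\]
Finally the standard approximation property of the $L^2$-projection, $\|w - P_h w\|_{L^2(\Omega)} \leq C h |w|_{H^1(\Omega)}$, cancels the factor $h^{-2}$ and gives $|Q_h(w-P_hw)|_{H^1(\Omega)} \leq C|w|_{H^1(\Omega)}$. Combining the two bounds completes the proof.

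The main obstacle is making the locality step rigorous: one must pass from "$Q_h v$ on $T$ depends only on $v$ on $S(T)$" to a quantitative local $L^2$-bound $\|Q_h v\|_{L^2(T)} \leq C \|v\|_{L^2(S(T))}$ with a constant independent of $h$. This is where the hypothesis that the scaling factors $c_j$ are proportional to $|\supp \varphi_j|$ and that the mesh is quasi-uniform is used — it makes the local biorthogonal "mass" matrices uniformly well-conditioned after scaling, so the bound holds with a uniform constant. Everything else (inverse estimate, $H^1$-stability of $P_h$, approximation by $P_h$) is standard on quasi-uniform meshes; the reference \cite{Lam06} is cited precisely because it handles the analogous locally quasi-uniform case where this local argument requires the most care.
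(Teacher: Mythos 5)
Your proposal is correct and follows essentially the same route as the paper: decompose $Q_h w = Q_h(w-P_hw) + P_hw$, use the $H^1$-stability of $P_h$, an inverse inequality on the finite element function $Q_h(w-P_hw)$, the $L^2$-stability of $Q_h$, and the approximation property $\|w-P_hw\|_{L^2(\Omega)} \leq Ch|w|_{H^1(\Omega)}$. The only difference is that you localize the $L^2$-stability step element-by-element and sum with finite overlap, whereas the paper simply invokes the global stability of Lemma \ref{lemma1}, which suffices (and is shorter) under the quasi-uniformity assumed here; your local variant is exactly what becomes necessary in the locally quasi-uniform setting of \cite{Lam06}, as you note.
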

\begin{proof}
Using the $L^2$-stability from Lemma \ref{lemma1} and the inverse inequality,
we get for $w\in H^1(\Omega)$
\begin{eqnarray*}
|R_h w|_{H^1(\Omega)} 
&\leq& |R_h w-P_hw|_{H^1(\Omega)}+|P_hw|_{H^1(\Omega)}\\
&\leq& C \left(\frac{1}{h} \|R_h (w-P_hw)\|_{L^2(\Omega)}+|w|_{H^1(\Omega)}\right)\\
&\leq& C \left(\frac{1}{h} \|w-P_hw\|_{L^2(\Omega)}+
|w|_{H^1(\Omega)}\right)
\leq C|w|_{H^1(\Omega)}.
\end{eqnarray*}
\end{proof}
The following lemma establishes the approximation property of operator 
$R_h$ for a function $v \in H^{s}(\Omega)$, see also \cite{Lam06}.
\begin{lemma}\label{lemma3}
Under Assumption \ref{A1A2}(ii), 
there exists a constant $C$ independent 
of the mesh-size $h$ so that for $v \in H^{s+1}(\Omega)$, $0<s\leq 1$, we have
\begin{equation} \label{estn}
\begin{array}{ccc}
\|v-R_hv\|_{L^2(\Omega)} &\leq & C h^{1+s}|v|_{H^{s+1}(\Omega)}\\
\|v-R_hv\|_{H^1(\Omega)} &\leq&  C h^s|v|_{H^{s+1}(\Omega)}.
\end{array}
\end{equation}
\end{lemma}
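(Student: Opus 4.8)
The plan is to prove both estimates by comparing $Q_h v$ with the $L^2$-orthogonal projection $P_h v$ and exploiting the $L^2$-stability of $Q_h$ from Lemma \ref{lemma1} together with the standard approximation and stability properties of $P_h$. The key observation is that since the restriction of $Q_h$ to $S_h$ is the identity, we have $Q_h(P_h v) = P_h v$, and hence the identity $v - Q_h v = (v - P_h v) + Q_h(P_h v - v)$. This splitting reduces everything to facts already available.

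\textbf{Step 1 (the $L^2$-estimate).} Apply the triangle inequality to $v - Q_h v = (v - P_h v) + Q_h(P_h v - v)$. For the first term, the standard approximation property of the $L^2$-projection gives $\|v - P_h v\|_{L^2(\Omega)} \le C h^{1+s} |v|_{H^{s+1}(\Omega)}$ for $0 < s \le 1$ (by interpolation between the cases $s=0$ and $s=1$, using that $\cT_h$ is quasi-uniform and $S_h$ contains at least piecewise (multi)linear functions). For the second term, Lemma \ref{lemma1} yields $\|Q_h(P_h v - v)\|_{L^2(\Omega)} \le C \|v - P_h v\|_{L^2(\Omega)}$, which is bounded by the same quantity. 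Adding the two bounds gives the first line of \eqref{estn}.

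\textbf{Step 2 (the $H^1$-estimate).} Use the same splitting. The term $|v - P_h v|_{H^1(\Omega)} \le C h^s |v|_{H^{s+1}(\Omega)}$ follows again from the approximation properties of $P_h$ (interpolating between $s=0$, where one uses $H^1$-stability of $P_h$, and $s=1$). For the remaining term $|Q_h(P_h v - v)|_{H^1(\Omega)}$, I would invoke the inverse inequality on the quasi-uniform mesh to pass from the $H^1$-seminorm to $h^{-1}$ times the $L^2$-norm on $S_h$, then apply the $L^2$-stability of $Q_h$ from Lemma \ref{lemma1}, and finally the $L^2$-approximation estimate for $P_h$:
\[
|Q_h(P_h v - v)|_{H^1(\Omega)} \le \frac{C}{h}\|Q_h(P_h v - v)\|_{L^2(\Omega)} \le \frac{C}{h}\|v - P_h v\|_{L^2(\Omega)} \le C h^s |v|_{H^{s+1}(\Omega)}.
\]
Combining with the bound on $|v - P_h v|_{H^1(\Omega)}$ gives the second line of \eqref{estn}.

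\textbf{Main obstacle.} The only delicate point is making the fractional-order ($0 < s < 1$) estimates rigorous: the clean approximation bounds for $P_h$ are classical for integer $s$, so for non-integer $s$ one must invoke space interpolation (the $K$-method) and check that the operators in question map the relevant interpolation spaces correctly. This is routine but is the step where care is needed; everything else is a direct assembly of Lemma \ref{lemma1}, the inverse inequality, and standard $P_h$ bounds, exactly in the spirit of the proof of Lemma \ref{lemma2}.
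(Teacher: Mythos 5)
Your proposal is correct and follows essentially the same route as the paper: the identical splitting $v-Q_hv=(v-P_hv)+Q_h(P_hv-v)$ using that $Q_h$ is the identity on $S_h$, the $L^2$-stability of $Q_h$ from Lemma \ref{lemma1}, and the approximation properties of $P_h$. The only cosmetic difference is in the $H^1$-estimate, where the paper invokes the $H^1$-stability of $Q_h$ (Lemma \ref{lemma2}) while you inline the inverse-inequality argument that underlies that lemma's proof; both yield the same $h^s$ rate.
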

\begin{proof} 
We start with a triangle inequality
\[ \|v-R_hv\|_{L^2(\Omega)} \leq  \|v-P_hv\|_{L^2(\Omega)}+ 
\|P_hv-R_hv\|_{L^2(\Omega)}. \] 
Since $R_h$ acts as an identity on $\cL_h$, we have
\[ \|v-R_hv\|_{L^2(\Omega)} \leq
\|v-P_hv\|_{L^2(\Omega)}+ \|R_h(P_hv-v)\|_{L^2(\Omega)}.\]
Now we use the $L^2$-stability of $R_h$ from Lemma \ref{lemma1}
 to obtain 
\[\|v-R_hv\|_{L^2(\Omega)} \leq C\|v-P_hv\|_{L^2(\Omega)}.
\]
The first inequality of \eqref{estn} follows by using the approximation 
property of the orthogonal projection $P_h$ onto $\cL_h$, see \cite{Bra01}.
The second inequality of \eqref{estn} is proved similarly using 
the stability of $R_h$ in $H^1$-norm and the approximation 
property of the orthogonal projection $P_h$ onto $\cL_h$.
\end{proof}
Using the property of operator $R_h$, we can eliminate the 
degrees of freedom corresponding to $\bsigma_h$ so that 
the solution $u_h$ of \eqref{dsaddle} satisfies
\begin{equation}\label{dsplinem}
J_{\alpha}(u_h) = \min_{v_h \in \cL_h} J_{\alpha}(v_h),
\end{equation}
where \[
 J_{\alpha}(v_h)=\|Pv_h\|^2+\alpha \|\nabla (R_h \nabla v_h)\|_{L^2(\Omega)}^2
+\|R_h\nabla v_h -\nabla v_h\|^2_{L^2(\Omega)}-2\,(Pv_h)^T\bz.
\] 
In order to show that this problem has a unique solution, 
we define a P-inner product $\la\cdot,\cdot\ra_P$ with 
\[\la u_h,v_h\ra_P=
(Pu_h)^TPv_h+\alpha\int_{\Omega} \nabla\bsigma_h:\nabla\btau_h\,d\bx+
\int_{\Omega}(\bsigma_h-\nabla u_h)\cdot (\btau_h-\nabla v_h)\,d\bx,\] 
where $\bsigma_h = R_h\nabla u_h$ and $\btau_h = R_h\nabla v_h$. 
It follows that 
\[  J_{\alpha}(v_h) = \la v_h,v_h\ra_P - 2\,(Pv_h)^T\bz.\]

The following theorem shows that the P-inner product defines 
an inner product on the vector space $\cL_h$ or $\cQ_h$ 
given by \eqref{fespacel}.
\begin{theorem}\label{th0}
Let $\alpha >0$ and ${\cal G}\subset \bar\Omega$ have at least 
three non-collinear points for $d=2$ and 
 and four non-coplanar points for $d=3$.  
Then the P-inner product defined above is an inner 
product on the vector space $\cL_h$ or $\cQ_h$.
\end{theorem}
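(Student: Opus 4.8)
The plan is to verify the four inner-product axioms for $\la\cdot,\cdot\ra_P$, where bilinearity and symmetry are immediate from the definition together with the linearity of $Q_h$, so the whole argument reduces to showing positive definiteness: $\la v_h,v_h\ra_P = 0$ implies $v_h = 0$ for $v_h \in S_h$. Writing out $\la v_h,v_h\ra_P$ with $\btau_h = Q_h(\nabla v_h)$, we get a sum of three nonnegative terms, namely $\|P v_h\|^2$, $\alpha\|\nabla Q_h(\nabla v_h)\|_{L^2(\Omega)}^2$, and $\|Q_h(\nabla v_h) - \nabla v_h\|_{L^2(\Omega)}^2$ (note the definition in \eqref{dsplinem} is missing the square on the last term, but in the $P$-inner product it is genuinely a squared $L^2$-norm, as the display defining $\la\cdot,\cdot\ra_P$ shows). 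So the hypothesis $\la v_h,v_h\ra_P=0$ forces all three to vanish simultaneously.

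First I would exploit the third term: $\|Q_h(\nabla v_h)-\nabla v_h\|_{L^2(\Omega)} = 0$ means $Q_h(\nabla v_h) = \nabla v_h$ in $L^2(\Omega)$. Then the second term becomes $\alpha\|\nabla(\nabla v_h)\|_{L^2(\Omega)}^2 = \alpha\,|v_h|_{H^2(\Omega)}^2$, componentwise; since $\alpha>0$ this gives $D^\nu v_h = 0$ for all $|\nu|=2$, i.e. all second derivatives of $v_h$ vanish on each element, and hence (using $v_h \in H^1(\Omega) \cap S_h$, so $v_h$ is globally continuous and piecewise polynomial) $v_h$ is a globally affine function on $\Omega$: $v_h(\bx) = \bc\cdot\bx + b$ for some $\bc \in \bR^d$, $b \in \bR$. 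Here I would be slightly careful: a priori $v_h$ restricted to each element lies in $\cQ(\hat T)$ pulled back by $F_T$, which for simplices is affine already but for parallelotopes is multilinear; vanishing of all pure and mixed second derivatives kills the genuinely bilinear/multilinear cross terms as well, so $v_h|_T$ is affine on every $T$, and continuity across faces together with connectedness of $\Omega$ forces a single global affine function.

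Finally I would use the first term together with the geometric hypothesis on $\cal G$: $\|Pv_h\|^2 = 0$ means $v_h(\bx_i) = 0$ at every data point $\bx_i \in \cal G$. An affine function $\bx\mapsto \bc\cdot\bx + b$ vanishing at three non-collinear points (for $d=2$) or four non-coplanar points (for $d=3$) must be identically zero — the standard argument being that the differences $\bx_i - \bx_1$ span $\bR^d$, so $\bc\cdot(\bx_i-\bx_1)=0$ for all $i$ forces $\bc = \bm{0}$, and then $b = v_h(\bx_1) = 0$. Hence $v_h \equiv 0$, completing positive definiteness and the proof.

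The main obstacle — really the only nonroutine point — is the step identifying a piecewise-$\cQ(\hat T)$, globally $C^0$ function with vanishing elementwise second derivatives as a single global affine function; one must handle the multilinear (parallelotope) case, where "piecewise linear" is not automatic, and invoke the global continuity and the connectedness of $\Omega$ to glue the local affine pieces. Everything else (bilinearity, symmetry, the $\alpha>0$ and geometric-genericity deductions) is elementary linear algebra.
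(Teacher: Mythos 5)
Your proposal is correct and follows essentially the same route as the paper's proof: reduce to positive definiteness, force the three nonnegative terms of $\la v_h,v_h\ra_P$ to vanish separately, deduce that $\nabla v_h$ is constant (the paper argues $\btau_h$ constant then $\nabla v_h=\btau_h$; you argue $\nabla v_h=\btau_h$ then its gradient vanishes), and finish with the non-collinearity/non-coplanarity of $\cal G$. Your extra care about gluing the affine pieces in the multilinear case is a minor refinement of the same argument, not a different approach.
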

\begin{proof}
In order to show that the P-inner product is indeed an inner product, 
we have to prove the following properties of P-inner product:
\begin{itemize}
\item[(1)] $\la v_h,v_h\ra_P \geq 0,\;\text{and}\;  \la v_h,v_h\ra_P= 0\;\text{if and 
only if}\; v_h=0,\quad v_h \in \cL_h$,  
\item[(2)] $\la v_h+w_h,z_h\ra_P =\la v_h,z_h\ra_P+\la w_h,z_h\ra_P,\quad v_h,w_h,z_h\in \cL_h$,
\item[(3)] $\la v_h,bz\ra_P=b\la v_h,z_h\ra_P,\quad v_h\in \cL_h,\;b\in\bR$,
\item[(4)] $\la v_h,w_h\ra_P=\la w_h,v_h\ra_P,\quad v_h,w_h\in \cL_h$.
\end{itemize}
It is trivial to show that the P-inner product 
satisfies the second, third and fourth properties.  
It is also obvious that $\la v_h,v_h\ra_P \geq 0$, and 
$\la v_h,v_h\ra_P=0$ if $v_h=0$. 
It remains to show that  $\la v_h,v_h\ra_P =0$ implies $v_h=0$.
We have $\la v_h,v_h\ra_P=
\|Pv_h\|^2+\alpha\|\nabla\btau_h\|^2_{L^2(\Omega)}+
\|\btau_h-\nabla v_h\|_{L^2(\Omega)}$ with 
$\btau_h=R_h \nabla v_h$. Let $\la v_h,v_h\ra_P=0$. Then, 
$\|Pv_h\|^2=0$, $\|\nabla\btau_h\|^2_{L^2(\Omega)}=0$ and 
$\|\btau_h-\nabla v_h\|_{L^2(\Omega)}=0$
separately as they are all positive. Since $\btau_h$ is continuous, 
$\|\nabla \btau_h\|_{L^2(\Omega)}=0$ if and only if $\btau_h$ is a constant 
vector function in $\Omega$. Similarly, 
$\|\btau_h-\nabla v_h\|_{L^2(\Omega)}=0$ implies that  
$\nabla v_h$ is also constant in $\Omega$, 
and thus $v_h$ is a global linear function in $\Omega$.  
On the other hand, $\|Pv_h\|=0$ implies that $v_h$ is zero on 
${\cal G} \subset \bar\Omega$, which contains at least 
three non-collinear points for $d=2$
or four non-coplanar points for $d=3$.
Hence $v_h$ is a global linear function which is 
zero at three non-collinear points for $d=2$
or four non-coplanar points for $d=3$,
and therefore, identically vanishes in $\Omega$.
\end{proof}
The P-norm of an element $u_h \in \cL_h$ or $u_h \in \cQ_h$ 
induced by the inner product 
$\la\cdot,\cdot\ra_P$ is given by 
$\|u_h\|_P^2=
\|Pu_h\|^2+\alpha \|\nabla R_h\nabla u_h\|_{L^2(\Omega)}^2+
\|R_h\nabla u_h -\nabla u_h\|^2_{L^2(\Omega)}$. 
Let the bilinear form  $a(\cdot,\cdot)$ be defined as 
\begin{eqnarray*}a(u_h,v_h)&=&
(Pu_h)^TPv_h+\alpha \int_{\Omega} \nabla \bsigma_h:\nabla \btau_h\,d\bx+\int_{\Omega}(\bsigma_h-\nabla u_h)\cdot (\btau_h-\nabla v_h)\,d\bx
\end{eqnarray*}
with $\bsigma_h = R_h\nabla u_h$ and $\btau_h = R_h\nabla v_h$.
Since the bilinear form $a(\cdot,\cdot)$ is symmetric, 
the minimization problem \eqref{dsplinem} is equivalent to the 
variational problem of finding $u_h \in \cL_h$ or $u_h \in \cQ_h$ 
 such that \cite{Cia78,Bra01}
\begin{equation}\label{mvarp}
a(u_h,v_h)=f(v_h),\quad v_h \in \cL_h \; \text{or}\; u_h \in \cQ_h.
\end{equation}
Furthermore, the following corollary holds.
\begin{corollary}
Under the assumptions of Theorem \ref{th0}, the variational problem 
\eqref{mvarp} admits a unique solution which 
depends continuously on the data.
\end{corollary}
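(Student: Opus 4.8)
The plan is to invoke the Lax--Milgram theorem applied to the bilinear form $a(\cdot,\cdot)$ on the Hilbert space $(S_h, \la\cdot,\cdot\ra_P)$, which is legitimate precisely because Theorem \ref{th0} guarantees that $\la\cdot,\cdot\ra_P$ is a genuine inner product on the finite-dimensional space $S_h$, hence $(S_h,\|\cdot\|_P)$ is complete. First I would observe that, by the very definition of the P-norm and of $a(\cdot,\cdot)$, we have the identity $a(v_h,v_h)=\|v_h\|_P^2$ for all $v_h\in S_h$; this gives coercivity of $a$ with constant $1$. Next I would check boundedness: using the Cauchy--Schwarz inequality termwise on each of the three pieces of $a(u_h,v_h)$ — the data-fitting term $(Pu_h)^TPv_h$, the term $\alpha\la\nabla\bsigma_h,\nabla\btau_h\ra_{L^2(\Omega)}$, and the stabilization term $\la\bsigma_h-\nabla u_h,\btau_h-\nabla v_h\ra_{L^2(\Omega)}$ — and then recognizing each resulting factor as exactly a summand of $\|u_h\|_P$ respectively $\|v_h\|_P$, one gets $|a(u_h,v_h)|\le\|u_h\|_P\,\|v_h\|_P$. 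Symmetry of $a(\cdot,\cdot)$ was already noted in the excerpt, so in fact $a(\cdot,\cdot)=\la\cdot,\cdot\ra_P$, but I would not even need symmetry for Lax--Milgram.

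Then I would verify that $f(\cdot)$ is a bounded linear functional on $(S_h,\|\cdot\|_P)$: linearity is clear, and boundedness follows from $|f(v_h)|=|(Pv_h)^T\bz|\le\|\bz\|\,\|Pv_h\|\le\|\bz\|\,\|v_h\|_P$, since $\|Pv_h\|$ is one of the terms under the square root defining $\|v_h\|_P$. With coercivity, boundedness, and the completeness of $(S_h,\|\cdot\|_P)$ in hand, the Lax--Milgram theorem yields a unique $u_h\in S_h$ solving \eqref{mvarp}, and the stability estimate $\|u_h\|_P\le\|f\|_{*}$ (operator norm of $f$ with respect to $\|\cdot\|_P$), which is exactly continuous dependence on the data $\bz$. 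Finally I would remark that the equivalence of \eqref{mvarp} with the minimization problem \eqref{dsplinem}, already recorded in the excerpt, transfers existence, uniqueness and stability to \eqref{dsplinem} as well.

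I do not expect any serious obstacle here; the corollary is essentially a bookkeeping consequence of Theorem \ref{th0}. The only point requiring a moment of care is the boundedness/continuity estimate: one must be slightly careful that the norm $\|\cdot\|_P$ genuinely dominates each of $\|P\cdot\|$, $\sqrt{\alpha}\,\|\nabla Q_h(\nabla\cdot)\|_{L^2(\Omega)}$ and $\|Q_h\nabla\cdot-\nabla\cdot\|_{L^2(\Omega)}$ — which it does, term by term, directly from its definition — so that the Cauchy--Schwarz bound on $a(u_h,v_h)$ closes up into the product $\|u_h\|_P\|v_h\|_P$. Since everything is finite-dimensional, no subtle functional-analytic issue (reflexivity, weak compactness) arises, and the argument is complete once these elementary inequalities are written down.
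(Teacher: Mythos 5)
Your proposal is correct and follows essentially the same route as the paper: both verify continuity ($|a(u_h,v_h)|\leq \|u_h\|_P\|v_h\|_P$), coercivity ($a(v_h,v_h)=\|v_h\|_P^2$), and boundedness of $f$ with respect to $\|\cdot\|_P$, then invoke the Lax--Milgram lemma on $(S_h,\la\cdot,\cdot\ra_P)$, whose validity as an inner-product space is exactly what Theorem \ref{th0} supplies. Your write-up is simply a more detailed version of the paper's terse argument, including the observation that $a(\cdot,\cdot)$ coincides with the P-inner product.
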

\begin{proof}
Let $u_h,v_h \in \cL_h$, or $u_h,v_h \in \cQ_h$.
It then follows that 
$|a(u_h,v_h)|\leq \|u_h\|_P\|v_h\|_P$ and $|f(v_h)|\leq C\|v_h\|_P$. 
Moreover, using the definition of P-norm 
$a(v_h,v_h)=\|v_h\|_P$, and thus $a(\cdot,\cdot)$ is coercive
with respect to the norm $\|\cdot\|_P$. 
Hence our variational problem \eqref{mvarp} has a unique 
solution by Lax-Milgram Lemma \cite{Cia78,BS92}.
From the definition of the $P$-inner product, we have 
\[ a(v_h,v_h) = \|v_h\|^2_P\]   
and thus, for the solution $u_h$,  $\|u_h\|^2_P=f(u_h)$. 
\end{proof}
\begin{remark}
Using the unique solution $u_h$ of the variational 
problem \eqref{mvarp}, we have a unique solution $(u_h,\bsigma_h)$ of 
\eqref{dsaddle} with 
$\bsigma_h = R_h \nabla u_h$. 
\end{remark}
The error estimate is obtained in the \emph{energy norm} $\|\cdot\|_A$ 
induced by the bilinear form $A(\cdot,\cdot)$ defined as 
\begin{equation} 
\|(u,\bsigma)\|_A :=\sqrt{\|Pu\|^2+\alpha |\bsigma|^2_{H^1(\Omega)} + 
\|\bsigma-\nabla u\|^2_{L^2(\Omega)}},
\quad (u,\bsigma) \in \tilde V 
\times [H^1(\Omega)]^d,
\end{equation}
where $ \tilde V = C^0(\Omega) \cap H^1(\Omega)$.
The following theorem  is the starting point for the 
a priori error estimate, see also \cite{Cia78,Lam11b}.
\begin{theorem}\label{th1}
Let $u$ be the solution of  problem \eqref{lspline} 
satisfying $u \in H^4(\Omega)$, $\bsigma =\nabla u$ and 
$\bphi=\alpha\Delta\bsigma$,
 and $u_h$ be the solution of 
problem \eqref{mvarp}, and $\bsigma_h = R_h \nabla u_h$. 
Then there exists a constant $C>0$ independent of 
the mesh-size $h$ so that 
\begin{eqnarray*}
\|(u-u_h,\bsigma-\bsigma_h)\|_A \leq 
C\left(\inf_{(w_h,\sbtheta_h)\in \Ker B_h} \|(u-w_h,\bsigma-\btheta_h)\|_A+h |\bphi|_{H^1(\Omega)}\right).
\end{eqnarray*}
\end{theorem}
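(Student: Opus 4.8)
The plan is to run the classical Strang-type quasi-optimality argument for stabilized saddle point problems, in the spirit of \cite{BF91,AB93}, built on two facts: the discrete solution $(u_h,\bsigma_h)$ lies in $\Ker B_h$, and on $\Ker B_h$ the bilinear form $A(\cdot,\cdot)$ is not merely coercive but isometric with respect to $\|\cdot\|_A$. The first is immediate from the constraint equation of \eqref{dsaddle}: a pair $(v_h,\btau_h)\in V_h$ belongs to $\Ker B_h$ exactly when $\btau_h=Q_h(\nabla v_h)$, since the orthogonality against all of $[M_h]^d$ together with $\btau_h\in[S_h]^d$ determines $\btau_h$ componentwise through the well-defined operator $Q_h$; in particular $(u_h,\bsigma_h)\in\Ker B_h$. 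Moreover, for any $(v_h,\btau_h)\in\Ker B_h$ one has $A((u_h,\bsigma_h),(v_h,\btau_h))=a(u_h,v_h)$, so that \eqref{mvarp} gives the Galerkin identity
\[ A((u_h,\bsigma_h),(v_h,\btau_h))=f(v_h),\qquad (v_h,\btau_h)\in\Ker B_h. \]
By the definition of the $P$-norm and Theorem \ref{th0}, $A((v_h,\btau_h),(v_h,\btau_h))=\|(v_h,\btau_h)\|_A^2$ on $\Ker B_h$, and a termwise Cauchy--Schwarz inequality shows $A(\cdot,\cdot)$ is bounded by $\|\cdot\|_A$ on $C^0(\Omega)\times[H^1(\Omega)]^d$.

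The second ingredient is a consistency relation. The continuous pair $(u,\bsigma)$ with $\bsigma=\nabla u$, together with $\bphi=\alpha\Delta\bsigma$ --- which belongs to $[H^1(\Omega)]^d$ since $u\in H^4(\Omega)$ yields $\bsigma\in[H^3(\Omega)]^d$ --- satisfies the continuous saddle point analogue of \eqref{dsaddle}, so that in particular
\[ A((u,\bsigma),(v_h,\btau_h))+B(\bphi,(v_h,\btau_h))=f(v_h),\qquad (v_h,\btau_h)\in V_h; \]
here the stabilization part of $A$ drops out because $\bsigma=\nabla u$. Restricting to $(v_h,\btau_h)\in\Ker B_h$ and using $\int_\Omega\bmu_h\cdot(\btau_h-\nabla v_h)\,d\bx=0$ for every $\bmu_h\in[M_h]^d$ gives
\[ f(v_h)-A((u,\bsigma),(v_h,\btau_h))=B(\bphi,(v_h,\btau_h))=\int_\Omega(\bphi-\bmu_h)\cdot(\btau_h-\nabla v_h)\,d\bx. \]
Picking $\bmu_h$ to be the best $[M_h]^d$-approximation of $\bphi$, Assumption \ref{A1A2}(iii) applied componentwise together with the bound $\|\btau_h-\nabla v_h\|_{L^2(\Omega)}\le\|(v_h,\btau_h)\|_A$ produces the consistency estimate
\[ |f(v_h)-A((u,\bsigma),(v_h,\btau_h))|\le C\,h\,|\bphi|_{H^1(\Omega)}\,\|(v_h,\btau_h)\|_A,\qquad (v_h,\btau_h)\in\Ker B_h. \]

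Finally I would combine the two. For arbitrary $(w_h,\btheta_h)\in\Ker B_h$, set $e_h:=(u_h-w_h,\bsigma_h-\btheta_h)\in\Ker B_h$, whose first component is $u_h-w_h$. Using the isometry, then bilinearity and the Galerkin identity,
\[ \|e_h\|_A^2=A(e_h,e_h)=\bigl[f(u_h-w_h)-A((u,\bsigma),e_h)\bigr]+A\bigl((u,\bsigma)-(w_h,\btheta_h),\,e_h\bigr). \]
Bounding the first bracket by the consistency estimate and the second term by the boundedness of $A$, and dividing by $\|e_h\|_A$, yields $\|e_h\|_A\le C\,h\,|\bphi|_{H^1(\Omega)}+\|(u-w_h,\bsigma-\btheta_h)\|_A$; a triangle inequality followed by the infimum over $(w_h,\btheta_h)\in\Ker B_h$ gives the claimed estimate. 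Equivalently, one may invoke the abstract error bound for stabilized saddle point problems of \cite{BF91,AB93} and merely verify its hypotheses.

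The only non-routine step is the consistency relation: one must write down the continuous optimality system correctly, verify that the multiplier $\bphi=\alpha\Delta\bsigma$ indeed lies in $[H^1(\Omega)]^d$ under the regularity hypothesis, and handle the boundary contributions of the integrations by parts via the natural boundary conditions of the continuous problem --- after which the remaining estimates are standard. I also note that turning the surviving infimum over $\Ker B_h$ into an explicit $O(h)$ convergence rate, which is where the superconvergence property of the gradient recovery operator $Q_h$ mentioned in the abstract enters, is a separate matter not needed for the statement above.
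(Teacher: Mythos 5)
Your proposal is correct and follows essentially the same route as the paper's own proof: coercivity (in fact isometry) of $A$ on $\Ker B_h$, the add-and-subtract of $(u,\bsigma)$ combined with the Galerkin/consistency identity $A((u-u_h,\bsigma-\bsigma_h),(v_h,\btau_h))+B(\bphi,(v_h,\btau_h))=0$ on $\Ker B_h$, the bound $|B(\bphi,(v_h,\btau_h))|\leq Ch\|\btau_h-\nabla v_h\|_{L^2(\Omega)}|\bphi|_{H^1(\Omega)}$ via the best $[M_h]^d$-approximation of $\bphi$, and continuity of $A$ plus a triangle inequality. The only differences are cosmetic (you test with the error itself rather than writing the coercivity step as a supremum, and you spell out the $\Ker B_h$ characterization via $Q_h$ and the bound $\|\btau_h-\nabla v_h\|_{L^2(\Omega)}\leq\|(v_h,\btau_h)\|_A$, which the paper leaves implicit).
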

\begin{proof}
Here $u$, $\bsigma$ and $\bphi$ satisfy \cite{BF91}
 \begin{equation*}
\begin{array}{ccccccccc}
A((u,\bsigma),(v,\btau))&+&B(\bphi,(v,\btau))&=&f(v),\; &(v,\btau) &\in& \cV, \\
B(\bpsi,(u,\bsigma))&&&=&0,\; &\bpsi &\in & [L^2(\Omega)]^d.
\end{array}
\end{equation*}
Let $(w_h,\btheta_h) \in \Ker B_h$ so that 
$(u_h-w_h,\bsigma_h-\btheta_h) \in \Ker B_h$, and  hence 
\begin{eqnarray}\label{aeq1}
 \|(u_h-w_h,\bsigma_h-\btheta_h)\|_A 
\leq \sup_{(v_h,\btau_h)\in \Ker B_h} 
\frac{A((u_h-w_h,\bsigma_h-\btheta_h),(v_h,\btau_h))}
{\|(v_h,\btau_h)\|_A }
\end{eqnarray}
Since $A((u-u_h,\bsigma-\bsigma_h),(v_h,\btau_h))+ B(\bphi,(v_h,\btau_h))=0$ for all $(v_h,\btau_h) \in \Ker B_h$, we have 
\begin{eqnarray}\label{aeq2}
& &A((u_h-w_h,\bsigma_h-\btheta_h),(v_h,\btau_h)\notag\\&&= 
A((u-w_h,\bsigma-\btheta_h),(v_h,\btau_h))+A((u_h-u,\bsigma_h-\bsigma),(v_h,\btau_h))\notag\\
&&=A((u-w_h,\bsigma-\btheta_h),(v_h,\btau_h))+B(\bphi,(v_h,\btau_h)).
\end{eqnarray}
The continuity of $A(\cdot,\cdot)$ yields 
\begin{equation}\label{aeq3}
\left|A((u-w_h,\bsigma-\btheta_h),(v_h,\btau_h)\right|
 \leq \|(u-w_h,\bsigma-\btheta_h)\|_A \|(v_h,\btau_h)\|_A.
\end{equation}
Denoting the orthogonal projection of $\bphi$ onto $[\cM_h]^d$ with 
respect to $L^2$-inner product by $\tilde \bphi_h$, we have 
\begin{equation}\label{estorth}
 B(\bphi,(v_h,\btau_h))=\int_{\Omega} (\btau_h-\nabla v_h) \cdot (\bphi-\tilde \bphi_h)\,dx
\leq C h \|\btau_h-\nabla v_h\|_{L^2(\Omega)} |\bphi|_{H^1(\Omega)}.
\end{equation}
The result then follows by combining 
\eqref{aeq1}, \eqref{aeq2}, \eqref{aeq3} and \eqref{estorth}.
\end{proof}
Two different finite element methods for the 
discrete problem \eqref{dsaddle} are obtained by setting $\cV_h = 
\cQ_h \times [\cL_h]^d$ and  $\cV_h = \cL_h \times [\cL_h]^d$.
We prove suboptimal convergence rate in the energy norm 
$\|\cdot \|_A$ for both cases. 
In the first step, we consider $\cV_h=\cQ_h \times [\cL_h]^d$. 
\begin{theorem} \label{thn1}
Let $\cV_h=\cQ_h \times [\cL_h]^d$. 
Then under the assumptions of Theorem \ref{th1},
there exists $(v_h,\btau_h) \in \Ker B_h$ such that 
\begin{equation}\label{thenq}
\|(u-v_h,\bsigma-\btau_h)\|_A
\leq C h \|u\|_{H^3(\Omega)}.
\end{equation}
\end{theorem}
\begin{proof}
Let $v_h$ be the quadratic Lagrange interpolation of $u$ 
with respect to the mesh $\cT_h$.
Then it is well-known that 
\begin{equation} \label{apeq}
\|u-v_h\|_{H^k(\Omega)} \leq  C h^{2-k} |u|_{H^2(\Omega)},\quad k=0,1.
\end{equation}
Moreover, 
\begin{equation}
\|P(u-v_h)\|^2 \leq C h^2 |u|^2_{H^2(\Omega)}.
\end{equation}
Let us recall the definition of the error in the energy norm 
\[ \|(u-v_h,\bsigma-\btau_h)\|_A =
\sqrt{\|P(u-v_h)\|^2+\alpha |\bsigma-\btau_h|^2_{H^1(\Omega)} + 
\|\bsigma-\btau_h-\nabla u+\nabla v_h\|^2_{L^2(\Omega)}}.
\]
It is now sufficient to show that 
\[ \|\bsigma - \btau_h\|_{H^1(\Omega)} \leq h \|u\|_{H^3(\Omega)}.\] 
Since $u \in H^{3}(S(T))\cap  H^1(\Omega)$, $T \in \cT_h$, we have
\[ 
\| \nabla u-R_h \nabla v_h\|_{L^2(T)} \leq C h^{2}\|u\|_{H^{3}(S(T))},\]
as in \cite{BK03}. 
Hence we have 
\begin{equation}\label{approx}
 \|\bsigma -\btau_h\|_{L^2(\Omega)} \leq 
C h^2  \|u\|_{H^{3}(\Omega)}.
\end{equation}
Now  using a triangle 
inequality, an inverse estimate and projection property of $R_h$, 
we obtain 
\begin{align*}
 \|\bsigma -\btau_h\|_{H^1(\Omega)}  & \leq 
\|\bsigma -R_h\bsigma\|_{H^1(\Omega)}+  \|R_h \bsigma -R_h\nabla v_h\|_{H^1(\Omega)}\\
& \leq C \left(\|\bsigma -R_h\bsigma\|_{H^1(\Omega)}+  \frac{1}{h}
\|R_h \bsigma -R_h\nabla v_h\|_{L^2(\Omega)} \right)\\
& \leq C \left(\|\bsigma -R_h\bsigma\|_{H^1(\Omega)}+  \frac{1}{h}
\| \bsigma -R_h\nabla v_h\|_{L^2(\Omega)} \right).
\end{align*}
The first term in the right has 
 the correct approximation from Lemma \ref{lemma3}, and 
the second term from \eqref{approx}
\end{proof}
Using the results of Theorems \ref{th1} and \ref{thn1}, we 
get the following approximation result for the discrete solution.  
\begin{corollary}
Let $u$ be the solution of continuous problem \eqref{lspline} 
with $u \in H^4(\Omega)$, $\bsigma =\nabla u$ and 
$\bphi=\alpha\Delta\bsigma$, and $u_h$ be that of discrete 
problem \eqref{mvarp} with $\bsigma_h = R_h\nabla u_h$ and 
$\cV_h = \cQ_h \times [\cL_h]^d$
Then there exists a constant $C>0$ independent of 
the mesh-size $h$ so that 
\begin{eqnarray*}
\|(u-u_h,\bsigma-\bsigma_h)\|_A \leq 
Ch\left(\|u\|_{H^3(\Omega)}+|\bphi|_{H^1(\Omega)}\right).
\end{eqnarray*}
\end{corollary}
In order to show the approximation property with 
$\cV_h = \cL_h \times [\cL_h]^d$, we use the 
super-approximation of a gradient recovery operator recently proposed in 
\cite{XZ04}. This idea is utilised in \cite{Lam11b} 
to get a finite element approximation for the biharmonic problem. 
Since the super-approximation property is only available for 
the two-dimensional case, in the following, we assume that $\Omega \subset \bR^2$. 

First we need an assumption on our mesh similar to 
{\em Condition ($\talpha, \tsigma$)} in \cite{XZ04}. 
Let $\CN_h=\{\bx_i\}_{i=1}^{n_v}$ be the set of vertex nodes in $\cT_h$, 
and $S_{i}$ be the support 
of the finite element basis function $\phi_i$ at $\bx_i \in \CN_h$. 
We impose the following assumption on our mesh.
\begin{assumption}\label{ass:mesh}
 \begin{itemize}
\item[(1)] Let $\cT_h=\cT_h^2\cup \cT_h^1$ and 
$\bar \Omega= \bar \Omega^1_h\cup \bar\Omega^2_h$,
such that 
\[ |\Omega^2_h|=O(h^{\tsigma}),\; \tsigma>0,\quad\text{and}\quad 
\bar \Omega^i_h =\cup_{T \in \cT_h^i} \bar T,\; i=1,2.\]
\item[(2)] Choosing $\bx_i$ as the origin of local coordinates, 
  \[\sum\limits_{T \subset S_i} \frac{|T|}{|S_i|}\bz_T = O(h^{1+\talpha})\bone,\; 
\bx_i \in \CN_h \cap \Omega^1_h,\] 
\end{itemize}
where $\bz_T$ is the coordinate vector of the barycenter of element 
$T$, $\talpha>0$, and  
$\bone$ is the $d$-dimensional vector having each component $1$. 
\end{assumption}

If a  mesh is uniformly regular, the  assumption holds with $\talpha = \infty$ 
and $\tsigma = 1$.
That means we are allowing $O(h^{1+\alpha})$ deviation from 
uniformly regular meshes. 
In fact, if two adjacent triangles in $\cT_h$ form 
an $O(h^{1+\alpha})$ approximate parallelogram, this assumption 
is satisfied \cite{XZ04}, where two triangles are 
said to form an $O(h^{1+\alpha})$ approximate parallelogram, 
if the lengths of two opposite edges differ only by 
 $O(h^{1+\alpha})$ \cite{BX03}.

Let $(\nabla I_hu)_{|T}$ be the restriction of $\nabla I_hu$ to an 
element $T \in \cT_h$. 
Then using \eqref{eq2}, we have 
\[ 
R_h (\nabla I_h u)(\bx_i)= \sum_{T \subset S_i} \frac{|T|}{|S_i|}
(\nabla I_hu)_{|T}.\]
The following theorem can be proved exactly as in \cite{XZ04}.
\begin{theorem}  \label{thnew1}
Under Assumption \ref{ass:mesh}, if $ u \in W^{3,\infty}(S_i)$, 
for any $\bx_i \in \CN_h$
\[| (R_h \nabla I_h u)(\bx_i)-(\nabla u)(\bx_i)|\leq C 
\left(h^2+h^{1+\talpha} \right)\|u\|_{W^{3,\infty}(S_i)}. \]
\end{theorem}

Our goal is to prove a super-approximation property of the gradient 
recovery operator $R_h$ as in \cite{XZ04}. 
\begin{theorem} \label{thnew2}
Let $ u \in W^{3,\infty}(\Omega)$, and $I_h u$ be the linear Lagrange 
interpolation of $u$ with respect to $\cT_h$. 
Assume that the triangulation satisfies Assumption \ref{ass:mesh}.
Then \[ 
\|\nabla u - R_h \nabla I_h u\|_{L^2(\Omega)} \leq 
C h^{1+\rho} \|u\|_{W^{3,\infty}}(\Omega),\]
where $\rho = \min(\talpha,\frac{\tsigma}{2})$, 
and $\talpha$ and $\tsigma$ are as in 
Assumption \ref{ass:mesh}.
\end{theorem}
\begin{proof}
Since the Lagrange interpolation operator 
 reproduces all piecewise linear 
polynomials with respect to the mesh $\cT_h$,
\[ \| u- I_hu\|_{L^2(\Omega)} \leq C h^2 |u|_{H^2(\Omega)}.\]
Now we decompose 
\begin{equation*}
\nabla u -R_h \nabla I_h u = 
\nabla u -  I_h \nabla u + I_h \nabla u -   R_h \nabla I_h u 
\end{equation*}
so that a triangle inequality yields 
\begin{equation}\label{trieq}
\|\nabla u -R_h \nabla I_h u\|_{L^2(\Omega)} \leq 
\|\nabla u -  I_h \nabla u \|_{L^2(\Omega)}+ 
\|I_h \nabla u -   R_h \nabla I_h u\|_{L^2(\Omega)}. 
\end{equation}
The approximation property of $ I_h$ yields 
\[ \| \nabla u- I_h\nabla u\|_{L^2(\Omega)} \leq C h^2 |u|_{H^{3}(\Omega)}.\]
Under Assumption \ref{ass:mesh}, we have Theorem \ref{thnew1}, and hence 
 \begin{eqnarray*}
&&\|R_h \nabla I_h u-I_h \nabla u \|_{L^2(\Omega^1_h)}\\&\leq&
C \left(\sum_{T \subset \Omega^1_h} |T| 
\sum_{\bz \in \CN_h \cap \bar T}
|(R_h \nabla I_h u)(\bz)-(\nabla u)(\bz)|^2\right)^{1/2}\\
&\leq& C h^{1+\talpha}  \|u\|_{W^{3,\infty}(\Omega)}\sqrt{|\Omega^1_h|}
\leq C h^{1+\talpha}  \|u\|_{W^{3,\infty}(\Omega)}.
\end{eqnarray*}
Moreover, using Assumption \ref{ass:mesh} again, we get   
 \begin{eqnarray*}
\|R_h \nabla I_h u- I_h \nabla u \|_{L^2(\Omega^2_h)}
\leq C \left(h^2|u|_{W^{3,\infty}(\Omega^2_h)}+ h \|u\|_{W^{3,\infty}(\Omega)}\sqrt{|\Omega^2_h|} \right)\leq C  
h^{1+\frac{\tsigma}{2}} \|u\|_{W^{3,\infty}(\Omega)}.
\end{eqnarray*}
The final result follows from using the above estimates in \eqref{trieq}.
\end{proof}

The following theorem guarantees a sub-optimal convergence rate of 
the finite element approximation under Assumptions \ref{ass:mesh}.

\begin{theorem} \label{th2}
Let $\cV_h = \cL_h \times [\cL_h]^d$. 
Then under the assumptions of Theorem \ref{th1},
there exists $(v_h,\btau_h) \in \Ker B_h$ such that 
\begin{equation}\label{theq}
\|(u-v_h,\bsigma-\btau_h)\|_A
\leq C h^{\rho} \|u\|_{H^3(\Omega)}.,
\end{equation}
where $\rho = \min(\talpha,\frac{\tsigma}{2})$.
\end{theorem}
\begin{proof}
Although the proof of this theorem is similar to that of Theorem \ref{thn1}, 
we give a proof for completeness. 
Let $v_h$ be the Lagrange interpolation of $u$ with respect to the mesh 
$\cT_h$ using linear finite elements.
Then it is well-known that 
\begin{equation}
\|u-v_h\|_{H^k(\Omega)} \leq C h^{2-k} |u|_{H^2(\Omega)},\quad k=0,1.
\end{equation}
Moreover, by Sobolev embedding 
\begin{equation}
\|P(u-v_h)\|^2 \leq C h^2 |u|^2_{H^2(\Omega)}.
\end{equation}
Let us recall the definition of the error in the energy norm 
\[ \|(u-v_h,\bsigma-\btau_h)\|_A =
\sqrt{\|P(u-v_h)\|^2+\alpha |\bsigma-\btau_h|^2_{H^1(\Omega)} + 
\|\bsigma-\btau_h-\nabla u+\nabla v_h\|^2_{L^2(\Omega)}}.
\]
Let $\btau_h = R_h \nabla v_h$ so that $(v_h,\btau_h) \in \Ker B_h$.
The approximation property of operator $R_h$ given by 
Theorem \ref{thnew1} yields
\begin{equation}
\|\nabla u-R_h \nabla v_h\|_{L^2(\Omega)} \leq C h^{1+\rho}\|u\|_{H^{3}(\Omega)}.
\end{equation}
Hence, it suffices to show that 
\[ \|\bsigma -\btau_h\|_{H^1(\Omega)} \leq  C h^{\rho} \|u\|_{H^3(\Omega)}.
\]
Since $\bsigma =\nabla u$ and $\btau_h = R_h \nabla v_h$,
\begin{equation}\label{triest}
 \|\bsigma -\btau_h\|_{H^1(\Omega)}  \leq 
\|\bsigma -R_h\bsigma\|_{H^1(\Omega)}+  \|R_h \bsigma -R_h\nabla v_h\|_{H^1(\Omega)}.
\end{equation}
The first term in the right-hand side of \eqref{triest} has 
the correct approximation from Lemma \ref{lemma3}.
To estimate the second term, we use $\bsigma = \nabla u$ and 
apply an inverse estimate to get 
\begin{eqnarray*}
  \|R_h \bsigma -R_h\nabla v_h\|_{H^1(\Omega)}\leq 
\frac{C}{h}\|R_h \nabla u -R_h\nabla v_h\|_{L^2(\Omega)}.
\end{eqnarray*}
We use the projection property of $R_h$ to write 
\begin{eqnarray*}
  \|R_h \bsigma -R_h\nabla v_h\|_{H^1(\Omega)}\leq  \frac{C}{h}
\|R_h(\nabla u -R_h\nabla v_h)\|_{L^2(\Omega)}.
\end{eqnarray*}
Now using the fact that $R_h$ is stable in $L^2$-norm, we have 
\[ \|R_h \bsigma -R_h\nabla v_h\|_{H^1(\Omega)}\leq  \frac{C}{h}
\|\nabla u -R_h\nabla v_h\|_{L^2(\Omega)}.\]
Since Theorem \ref{thnew1}  yields 
\begin{equation}
\|\nabla u -R_h\nabla v_h\|_{L^2(\Omega)} \leq C h^{1+\rho}\|u\|_{H^3(\Omega)},
\end{equation}
we have 
\[ \|\bsigma -\btau_h\|_{H^1(\Omega)} \leq  C h^{\rho} \|u\|_{H^3(\Omega)}.
\]
\end{proof}
We combine the result of Theorems \ref{th1} and \ref{th2} to 
get the final result. 
\begin{corollary}
Let $u$ be the solution of continuous problem \eqref{lspline} 
with $u \in H^4(\Omega)$, $\bsigma =\nabla u$ and $\bphi=\alpha\Delta\bsigma$,
 and $u_h$ be that of discrete 
problem \eqref{mvarp} with $\bsigma_h = R_h\nabla u_h$ and 
$\cV_h = \cQ_h \times [\cL_h]^d$
Then there exists a constant $C>0$ independent of 
the mesh-size $h$ so that 
\begin{eqnarray*}
\|(u-u_h,\bsigma-\bsigma_h)\|_A \leq 
Ch^{\rho}\left(\|u\|_{H^3(\Omega)}+|\bphi|_{H^1(\Omega)}\right).
\end{eqnarray*}
\end{corollary}
Thus $u_h$ and $\bsigma_h$ converge to 
$u$ and $\bsigma$ with a convergence rate of $O(h^{\rho})$. As 
$\rho \leq 1$, this rate may not be optimal.
\section{Conclusion}
We have presented a stabilized mixed finite element method for 
approximating thin plate splines in two and three dimensions.
The mixed formulation introduces two additional vector variables -- 
gradient of the smoother and Lagrange multiplier -- as unknowns.  
In order to be able to eliminate these variables in an 
efficient way, we propose to use a pair of finite element bases 
satisfying a biorthogonality property for discretizing 
the gradient and the Lagrange multiplier. We have shown 
convergence of the finite element approximation 
to the solution of thin plate splines.
\bibliographystyle{plain}
\bibliography{/home/bishnu/papers/total}

\begin{thebibliography}{10}

\bibitem{AO00}
M.~Ainsworth and J.~T. Oden.
\newblock {\em A Posteriori Error Estimation in Finite Element Analysis}.
\newblock Wiley--Interscience, New York, 2000.

\bibitem{AHR98}
I.~Altas, M.~Hegland, and S.~Roberts.
\newblock Finite element thin plate splines for surface fitting.
\newblock In {\em Computational Techniques and Applications: CTAC97}, pages
  289--296, 1998.

\bibitem{AB93}
D.N. Arnold and F.~Brezzi.
\newblock Some new elements for the {R}eissner--{M}indlin plate model.
\newblock In {\em Boundary Value Problems for Partial Differerntial Equations
  and Applications}, pages 287--292. Masson, Paris, 1993.

\bibitem{AF97}
D.N. Arnold and R.S. Falk.
\newblock Analysis of a linear-linear finite element for the
  {R}eissner--{M}indlin plate model.
\newblock {\em Mathematical Models and Methods in Applied Science}, pages
  217--238, 1997.

\bibitem{BSJ95}
A.~Bab-Hadiashar, D.~Suter, and R.~Jarvis.
\newblock Optic flow computation using interpolating thin-plate splines.
\newblock In {\em Second Asian Conference on Computer Vision (ACCV'95)}, pages
  452--456, Singapore, 1995.

\bibitem{BX03}
R.~Bank and J.~Xu.
\newblock Asymptotically exact a posteriori error estimators, part {I}: Grids
  with superconvergence.
\newblock {\em SIAM Journal on Numerical Analysis}, 41:2294--2312, 2003.

\bibitem{BMP94}
C.~Bernardi, Y.~Maday, and A.T. Patera.
\newblock A new nonconforming approach to domain decomposition: the mortar
  element method.
\newblock In H.~Brezzi et~al., editor, {\em {N}onlinear partial differential
  equations and their applications}, pages 13--51. Paris, 1994.

\bibitem{BL97}
D.~Boffi and C.~Lovadina.
\newblock Analysis of new augmented {L}agrangian formulations for mixed finite
  element schemes.
\newblock {\em Numerische Mathematik}, 75:405--419, 1997.

\bibitem{Bra96}
D.~Braess.
\newblock Stability of saddle point problems with penalty.
\newblock {\em $M^2AN$}, 30:731--742, 1996.

\bibitem{Bra01}
D.~Braess.
\newblock {\em Finite Elements. Theory, Fast Solver, and Applications in Solid
  Mechanics}.
\newblock Cambridge Univ. Press, Second Edition, Cambridges, 2001.

\bibitem{BK03}
J.~Brandts and M.~K$\check{\text{r}}$\'{i}$\check{\text{z}}$ek.
\newblock Gradient superconvergence on uniform simplicial partitions of
  polytopes.
\newblock {\em IMA Journal of Numerical Analysis}, 23:489--505, 2003.

\bibitem{BS94}
S.C. Brenner and L.R. Scott.
\newblock {\em The Mathematical Theory of Finite Element Methods}.
\newblock Springer--Verlag, New York, 1994.

\bibitem{BS92}
S.C. Brenner and L.~Sung.
\newblock Linear finite element methods for planar linear elasticity.
\newblock {\em Mathematics of Computation}, 59:321--338, 1992.

\bibitem{BF91}
F.~Brezzi and M.~Fortin.
\newblock {\em Mixed and hybrid finite element methods}.
\newblock Springer--Verlag, New York, 1991.

\bibitem{CHH00}
X.~Cheng, W.~Han, and H.~Huang.
\newblock Some mixed finite element methods for biharmonic equation.
\newblock {\em Journal of Computational and Applied Mathematics}, 126:91--109,
  2000.

\bibitem{CR74}
P.~Ciarlet and P.~Raviart.
\newblock A mixed finite element method for the biharmonic equation.
\newblock In C.~De Boor, editor, {\em Symposium on Mathematical Aspects of
  Finite Elements in Partial Differential Equations}, pages 125--143, New York,
  1974. Academic Press.

\bibitem{Cia78}
P.G Ciarlet.
\newblock {\em The Finite Element Method for Elliptic Problems}.
\newblock North Holland, Amsterdam, 1978.

\bibitem{Duc77}
J.~Duchon.
\newblock Splines minimizing rotation-invariant semi-norms in {S}obolev spaces.
\newblock In {\em Constructive Theory of Functions of Several Variables,
  Lecture Notes in Mathematics}, volume 571, pages 85--100. Springer-Verlag,
  Berlin, 1977.

\bibitem{Fal78}
R.S. Falk.
\newblock Approximation of the biharmonic equation by a mixed finite element
  method.
\newblock {\em SIAM Journal on Numerical Analysis}, 15:556--567, 1978.

\bibitem{Gal03}
A.~Gal\'antai.
\newblock {\em Projectors and Projection Methods}.
\newblock Kluwer Academic Publishers, Dordrecht, 2003.

\bibitem{Isk04}
A.~Iske.
\newblock {\em Multiresolution Methods in Scattered Data Modelling}, volume~37
  of {\em LNCS}.
\newblock Springer, Heidelberg, 2004.

\bibitem{JP82}
C.~Johnson and J.~Pitk\"aranta.
\newblock Some mixed finite element methods related to reduced integration.
\newblock {\em Mathematics of Computation}, 38:375--400, 1982.

\bibitem{KLP01}
C.~Kim, R.D. Lazarov, J.E. Pasciak, and P.S. Vassilevski.
\newblock Multiplier spaces for the mortar finite element method in three
  dimensions.
\newblock {\em SIAM Journal on Numerical Analysis}, 39:519--538, 2001.

\bibitem{Lam06}
B.P. Lamichhane.
\newblock {\em Higher Order Mortar Finite Elements with Dual Lagrange
  Multiplier Spaces and Applications}.
\newblock PhD thesis, Universit\"at Stuttgart, 2006.

\bibitem{Lam11b}
B.P. Lamichhane.
\newblock A stabilized mixed finite element method for the biharmonic equation
  based on biorthogonal systems.
\newblock {\em Journal of Computational and Applied Mathematics},
  235:5188--5197, 2011.

\bibitem{LSW05}
B.P. Lamichhane, R.P. Stevenson, and B.I. Wohlmuth.
\newblock Higher order mortar finite element methods in 3{D} with dual
  {L}agrange multiplier bases.
\newblock {\em Numerische Mathematik}, 102:93--121, 2005.

\bibitem{LW07}
B.P. Lamichhane and B.I. Wohlmuth.
\newblock Biorthogonal bases with local support and approximation properties.
\newblock {\em Mathematics of Computation}, 76:233--249, 2007.

\bibitem{Mon87}
P.~Monk.
\newblock A mixed finite element method for the biharmonic equation.
\newblock {\em SIAM Journal on Numerical Analysis}, 24:737--749, 1987.

\bibitem{Ram02}
T.~Ramsay.
\newblock Spline smoothing over difficult regions.
\newblock {\em Journal of Royal Statistical Society. Series B (Statistical
  Methodology)}, 64:307--319, 2002.

\bibitem{RHA03}
S.~Roberts, M.~Hegland, and I.~Altas.
\newblock Approximation of a thin plate spline smoother using continuous
  piecewise polynomial functions.
\newblock {\em SIAM Journal on Numerical Analysis}, 41:208--234, 2003.

\bibitem{SZ90}
L.~R. Scott and S.~Zhang.
\newblock Finite element interpolation of nonsmooth functions satisfying
  boundary conditions.
\newblock {\em Mathematics of Computation}, 54(190):483--493, 1990.

\bibitem{Szy06}
D.B. Szyld.
\newblock The many proofs of an identity on the norm of oblique projections.
\newblock {\em Numerical Algorithms}, 42:309--323, 2006.

\bibitem{Wah90}
G.~Wahba.
\newblock {\em Spline Models for Observational Data}, volume~59 of {\em Series
  in Applied Mathematic}.
\newblock SIAM, Philadelphia, first edition, 1990.

\bibitem{Wen05}
H.~Wendland.
\newblock {\em Scattered Data Approximation}.
\newblock Cambridge University Press, first edition, 2005.

\bibitem{BWHabil}
B.I. Wohlmuth.
\newblock {\em Discretization Methods and Iterative Solvers Based on Domain
  Decomposition}, volume~17 of {\em LNCS}.
\newblock Springer, Heidelberg, 2001.

\bibitem{XZ04}
J.~Xu and Z.~Zhang.
\newblock Analysis of recovery type a posteriori error estimators for mildly
  structured grids.
\newblock {\em Mathematics of Computation}, 73:1139--1152, 2004.

\end{thebibliography}
\end{document}